\theoremstyle{plain}
\newtheorem{theorem}[section]{Theorem}
\newtheorem{proposition}[section]{Proposition}
\newtheorem{corollary}[section]{Corollary}
\theoremstyle{definition}
\theoremstyle{remark}
\newtheorem{remark}[section]{Remark}
\def\urltilda{\kern -.15em\lower .7ex\hbox{\~{}}\kern .04em}
\newcommand{\codim}{\operatorname{codim}}
\renewcommand{\dim}{\operatorname{dim}}
\newcommand{\depth}{\operatorname{depth}}
\newcommand{\hh}{\operatorname{H}}
\newcommand{\Hom}{\operatorname{Hom}}
\newcommand{\rank}{\operatorname{rank}} \newcommand{\vf}{\varphi}
\renewcommand{\le}{\leqslant}
\renewcommand{\ge}{\geqslant}
\newcommand{\Ext}{\operatorname{Ext}}
\newcommand{\Spec}{\operatorname{Spec}}
\newcommand{\supp}{\operatorname{supp}}
\newcommand{\Supp}{\operatorname{Supp}}
\newcommand{\projdim}{\operatorname{proj.dim}}
\newcommand{\Ltensor}{\otimes^{\mathsf{L}}}
\newcommand{\lra}{\longrightarrow}
\newcommand{\bbZ}{\mathbb Z}
\newcommand{\fm}{\mathfrak{m}} \newcommand{\fp}{\mathfrak{p}}
\newcommand{\xra}[2][]{%
  \mathchoice%
  {\xrightarrow[#1]{\hspace{.58em}\mathclap{#2}\hspace{.58em}}}
  {\xrightarrow[#1]{\hspace{.44em plus
        1em}\mathclap{#2}\hspace{.44em}}} { \ }{ \ } }
\begin{document}

\title[Dimension of finite free complexes]{Dimension of finite free
  complexes over\\ commutative Noetherian rings}

\author[L.\,W.\ Christensen]{Lars Winther Christensen} \address{Texas
  Tech University, Lubbock, TX 79409, U.S.A.}
\email{lars.w.christensen@ttu.edu}
\urladdr{http://www.math.ttu.edu/\urltilda lchriste}

\author[S.\,B.~Iyengar]{Srikanth B. Iyengar} \address{University of
  Utah, Salt Lake City, UT 68588, U.S.A.}
\email{iyengar@math.utah.edu}
\urladdr{http://www.math.utah.edu/\urltilda iyengar}

\thanks{L.W.C.\ was partly supported by Simons Foundation
  collaboration grant 428308; S.B.I.\ was partly supported by NSF
  grant DMS-1700985.}

\date{9 September 2020}

\subjclass[2020]{13D02 (primary); 13C15}

\keywords{Codimension, dimension, perfect complex}

\dedicatory{To Roger and Sylvia Wiegand on the occasion of their
  aggregate $150^{th}$ birthday.}

\begin{abstract}
  Foxby defined the (Krull) dimension of a complex of modules over a
  commutative Noetherian ring in terms of the dimension of its
  homology modules. In this note it is proved that the dimension of a
  bounded complex of free modules of finite rank can be computed
  directly from the matrices representing the differentials of the
  complex.
\end{abstract}

\maketitle

\section*{Introduction} 

\noindent
This short note concerns certain homological
invariants---specifically, dimension and depth---of complexes of
modules over commutative Noetherian local rings. The concepts of depth
and dimension for modules, introduced by Krull and by Auslander and
Buchsbaum, respectively, need no recollection. Both concepts were
extended to complexes of modules by Foxby \cite{HBF79}, and also by
Iversen \cite{BIv77}. Their extensions agree up to a normalization; in
what follows we work with Foxby's definitions, recalled further below,
for they are better suited to computations in the derived category.
The depth and dimension of a complex depend only on the
quasi-isomorphism class of the complex; said differently, they are
defined on the derived category of the ring.

To compute these invariants one can usually reduce to the case where
the complex is finite free, for they are independent of the
domain. Indeed, if $Q\to R$ is a surjective map of rings with $Q$ a
regular local ring, then the depth and dimension of an $R$-complex $M$
coincide with the corresponding invariants of $M$ viewed as a complex
over $Q$. And, at least when $M$ is homologically finite, it is
quasi-isomorphic, over $Q$, to a finite free complex. Thus, in what
follows we consider a complex over a local ring $R$ of the form:
\begin{equation*}
  F \:\colonequals\: 0 \lra F_b\xra{\partial} \cdots \xra{\partial} F_a\lra 0
\end{equation*}
where each $F_i$ is a free $R$-module of finite rank. We assume that $F$ is
minimal, in that $\partial(F)\subseteq \fm F$, where $\fm$ is the
maximal ideal of $R$.

For such a complex $F$, the depth can be read off easily: The equality
of Auslander and Buchsbaum for modules of finite projective dimension
applies equally to complexes---this was proved by Foxby
\cite{HBF80}---and yields that the depth of $F$ equals $\depth R - b$,
provided that $F_b \ne 0$.  In this note we establish a
formula that expresses the dimension of $F$ in terms of the ranks of
the modules $F_i$ and the Fitting ideals of the differentials; see
Theorem~\ref{mainthm}. We were lead to it in an attempt to relate the
codimension, in the sense of Bruns and Herzog~\cite{bruher}, to other
homological invariants. It turns out that the codimension of $F$
equals $\dim R - \dim_R \Hom_R(F,R)$; see Remark~\ref{bhcodim}. This
observation gives a different perspective on, and different proofs of,
certain results in \cite{bruher} related to the homological
conjectures; see Proposition~\ref{sup} and Theorem~\ref{Fstar}.

\section*{Dimension} 

\noindent
Let $R$ be a ring. By an $R$-complex we mean a complex of $R$-modules,
with lower grading:
\begin{equation*}
  X \: \colonequals \: \cdots \lra X_n \xra{\partial_{n}} X_{n-1}\lra \cdots
\end{equation*}
A graded $R$-module, such as the homology $\hh(X)$ of $X$, is viewed
as an $R$-complex with zero differentials; in particular, we use the
same grading convention for such objects.

\subsection*{Dimension}
Let $R$ be a commutative Noetherian ring. In \cite[Section 3]{HBF79}
Foxby introduced the \emph{dimension} of an $R$-complex $X$ to be
\begin{equation}
  \label{dim0}
  \dim_R X \colonequals \sup\{\dim(R/\fp) -
  \inf\hh(X)_\fp \mid \fp\in\Spec R \}\:.
\end{equation}
By \cite[Proposition 3.5]{HBF79} this invariant can be computed in
terms of the homology:
\begin{equation}
  \label{dim1}
  \dim_R X = \sup\{\dim_R \hh_n(X) - n \mid n\in \bbZ\}\:.
\end{equation}
The convention is that the dimension of the zero module is $-\infty$.

\subsection*{Finite free complexes}
By a \emph{finite free} $R$-complex we mean a bounded $R$-complex
\begin{equation}
  \label{ffc}
  F \:\colonequals\: 0\lra F_b\xra{\partial_b} F_{b-1} \lra \cdots \lra
  F_{a+1} \xra{\partial_{a+1}}F_a\lra 0
\end{equation}
where each $F_i$ is a free $R$-module of finite rank. For such a
complex $F$ we set
\begin{equation}
  \label{sn}
  s_n = \sum_{i \le n}(-1)^{n-i} \rank_RF_i \quad\text{for each $n\in\bbZ$}\:.
\end{equation}
Given a map $\vf$ between finite free modules we write $I_s(\vf)$ for
the ideal generated by the $s\times s$ minors of a matrix representing
$\vf$; see, for example, \cite[p.~21]{bruher}. It is convenient to
adopt the convention that the determinant and all minors of the empty
matrix is $1$; in particular, for $s \le 0$ an $s \times s$ minor of
any matrix is $1$. For $s \ge 1$ an $s\times s$ minor of a non-empty
matrix is $0$ if $s$ exceeds the number of rows or columns. For the
differentials of the complex \eqref{ffc} this means that
$I_{s_{n}}(\partial_{n+1}) = R$ holds for integers $n$ outside
$[a,b]$.

\begin{theorem}
  \label{mainthm}
  With $F$ and $s_n$ as above, there is an equality
  \begin{equation*}
    \dim_R F = \sup\{\dim (R/I_{s_{n}}(\partial_{n+1})) - n
    \mid n\in\bbZ \}\:.
  \end{equation*}
\end{theorem}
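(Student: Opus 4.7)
The plan is to combine Foxby's formula \eqref{dim0} with a prime-by-prime translation between vanishing of homology and behavior of the Fitting ideals. For $\fp \in \Spec R$ set
\begin{equation*}
  i(\fp) \colonequals \inf\{m \in \bbZ : \hh_m(F)_\fp \neq 0\}
  \quad\text{and}\quad
  j(\fp) \colonequals \inf\{n \in \bbZ : I_{s_n}(\partial_{n+1})_\fp \neq R_\fp\}.
\end{equation*}
Using $\dim R/I = \sup\{\dim R/\fp \mid I \subseteq \fp\}$ and swapping suprema, the right-hand side of Theorem~\ref{mainthm} becomes $\sup_\fp(\dim(R/\fp) - j(\fp))$; by \eqref{dim0} the left-hand side equals $\sup_\fp(\dim(R/\fp) - i(\fp))$. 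The theorem thus reduces to the pointwise equality $i(\fp) = j(\fp)$ for every $\fp$.

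The key lemma I would establish is: \emph{for every $m \in \bbZ$, one has $\hh_k(F)_\fp = 0$ for all $k < m$ if and only if $I_{s_n}(\partial_{n+1})_\fp = R_\fp$ for all $n < m$}. This is a localization of the classical Buchsbaum--Eisenbud exactness criterion, and I would prove both directions by induction on $m$ built around the same ``split off a trivial block'' step at the bottom index $a$. Since $s_a = \rank_R F_a$, the vanishing $\hh_a(F)_\fp = 0$ is equivalent to surjectivity of $\partial_{a+1}$ onto $F_{a,\fp}$, which over the local ring $R_\fp$ is equivalent to $I_{s_a}(\partial_{a+1})_\fp = R_\fp$ by Nakayama's lemma. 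Under either condition, bases for $F_{a+1,\fp}$ and $F_{a,\fp}$ can be chosen so that $\partial_{a+1}$ acquires the block form $(I_{s_a}\ 0)$, whereupon $F_\fp$ decomposes into the contractible complex $0 \to R_\fp^{s_a} \xrightarrow{\cong} R_\fp^{s_a} \to 0$ in degrees $a+1, a$ and a shorter complex $F'$ with $F'_a = 0$ and $\rank F'_{a+1} = s_{a+1}$.

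The main obstacle is verifying that this reduction preserves the Fitting ideals at all positions above $a$: $I_{s_n}(\partial^{F'}_{n+1})_\fp = I_{s_n}(\partial_{n+1})_\fp$ for every $n > a$. For $n > a+1$ nothing has changed. For $n = a+1$ the relation $\partial_{a+1}\partial_{a+2} = 0$ in the adapted basis forces the first $s_a$ rows of the matrix of $\partial_{a+2}$ to vanish, so any non-zero $s_{a+1} \times s_{a+1}$ minor of $\partial_{a+2}$ uses only the remaining rows, which form exactly the matrix of $\partial^{F'}_{a+2}$. Once this invariance is in hand, the inductive step is routine: iterating the splitting replaces $F_\fp$ by a complex concentrated in degrees $\ge m - 1$, and $\hh_{m-1}$ of the reduced complex (which agrees with $\hh_{m-1}(F)_\fp$) is controlled by the surjectivity of its top differential, i.e.\ by $I_{s_{m-1}}(\partial_m)_\fp$. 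The remaining case $i(\fp) = +\infty$ (equivalently, $F_\fp$ acyclic) corresponds to the splitting procedure running to termination; consistency with $j(\fp) = +\infty$ on the Fitting side comes from the observation that acyclicity of $F_\fp$ forces the Euler characteristic and hence $s_b$ to vanish, so $I_{s_b}(\partial_{b+1}) = R$ by the $s \le 0$ convention.
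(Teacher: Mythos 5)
Your proof is correct, and at its core it runs along the same lines as the paper's: localize at a prime $\fp$, and use the fact that over the local ring $R_\fp$ a finite complex of free modules is split exact in low degrees precisely when the relevant Fitting ideals are the unit ideal. The difference is organizational, and it is a pleasant one. The paper proves the two inequalities separately, arguing in one direction that vanishing of homology forces the localized image of $\partial_{n+1}$ to be a free direct summand of expected rank (hence the Fitting ideal is unit), and in the other direction invoking \cite[Lemma 1.4.9]{bruher} together with an induction on the degree. You instead reduce everything at the outset, via a swap of suprema, to the single pointwise identity $i(\fp) = j(\fp)$, and prove that by a self-contained ``split off a contractible block'' induction whose crucial technical verification---that splitting off $0 \to R_\fp^{s_a} \xrightarrow{\cong} R_\fp^{s_a} \to 0$ leaves the higher ideals of minors $I_{s_n}(\partial_{n+1})_\fp$ unchanged, because $\partial_{a+1}\partial_{a+2}=0$ kills the first $s_a$ rows of $\partial_{a+2}$ in the adapted basis---you correctly identify and supply. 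What this buys is a symmetric treatment in which both implications come from one and the same splitting step (rather than one direction being a direct argument about free images and the other going through a citation), plus the clean, quotable intermediate statement $i(\fp)=j(\fp)$ that the paper leaves implicit. Two small points worth tightening in a written-out version: the induction is really on the bottom index $k$ of the reduced complex (from $a$ up to $m-1$), not literally on $m$; and in the terminal case it helps to note that if the procedure carries all the way to degree $b$ then $\rank F^{(b)}_b = s_b \ge 0$, so acyclicity of $F_\fp$ forces $s_b = 0$, matching your Euler characteristic remark and explaining why $I_{s_b}(\partial_{b+1})=R$ by the $s \le 0$ convention.
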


\begin{proof}
  To prove the inequality ``$\ge$'' we verify that
  \begin{equation*}
    \dim (R/I_{s_{n}}(\partial_{n+1})) \le \dim_R F + n
  \end{equation*}
  holds for every integer $n$ in $[a,b]$.  Fix such an $n$. The
  inequality above holds if and only if one has
  $I_{s_{n}}(\partial_{n+1})_\fp=R_\fp$ for every $\fp\in\Spec R$ with
  $\dim(R/\fp) > \dim_R F + n$.  For such a prime ideal and any
  integer $i\le n$ one gets
  \begin{equation*}
    \dim_R\hh_i(F)\le \dim_R F + i <\dim(R/\fp)
  \end{equation*}
  where the first inequality holds by \eqref{dim1}.  This yields
  \begin{equation*}
    \hh_i(F)_\fp =0 \quad \text{for all $i\le n$ }\:,
  \end{equation*}
  which implies that the homology of the complex
  \begin{equation}
    \label{eq:Fp}
    (F_{n+1})_\fp \xrightarrow{(\partial_{n+1})_\fp} (F_{n})_{\fp} \lra
    \cdots \lra  (F_{a+1})_\fp \xrightarrow{(\partial_{a+1})_\fp}(F_{a})_{\fp} \lra 0
  \end{equation}
  is zero in degrees $\le n$. It follows that the image of
  $(\partial_{n+1})_{\fp}$ is a free $R_{\fp}$-module of rank
  $s_n$. Hence one has $I_{s_{n}}(\partial_{n+1})_\fp=R_\fp$.
  
  To prove the opposite inequality, ``$\le$'', we show that
  \begin{equation*}
    \dim_R \hh_{n}(F) - n \leq \sup\{\dim (R/I_{s_i}(\partial_{i+1})) - i
    \mid a \le i \le b\}
  \end{equation*}
  holds for each integer $n$ in $[a,b]$. Let $t$ be the
  supremum above. One needs to verify that $\hh_{n}(F)_{\fp}=0$ holds
  for primes $\fp$ with $\dim(R/\fp) > t + n$. Fix such a $\fp$; for
  every $i\le n$ one has
  \begin{equation*}
    \dim (R/I_{s_i}(\partial_{i+1})) \le t+ i \le t+n <\dim(R/\fp)
  \end{equation*}
  so that $I_{s_i}(\partial_{i+1})_{\fp} = R_{\fp}$. We now argue by
  induction on $i$ that the homology of the complex \eqref{eq:Fp} is
  zero in degrees $\le n$; in particular, one has
  $\hh_{n}(F)_{\fp}=0$, as desired. With $f_i = \rank_RF_i$ and
  $K_i = \ker \partial_i$ the argument goes as follows: In the base
  case $i=a$ one applies \cite[Lemma~1.4.9]{bruher} to the presentation of
  the image of $\partial_{a+1}$ afforded by \eqref{eq:Fp}, and one
  concludes that it is a free submodule of $F_a$ of rank $f_a$, i.e.\
  the whole thing. One also notices that a free module contained in
  $K_{a+1}$ has rank at most $s_{a+1}$. In the induction step one
  applies \emph{op.cit} to the presentation of the image of
  $\partial_{i+1}$ and concludes that it is a free module of rank
  $f_{i+1} - s_{i+1}=s_i$. By the induction hypothesis a free module
  contained in $K_i$ has rank at most $s_i$, so the complex is exact
  at $(F_i)_\fp$.
\end{proof}

\subsection*{Codimension}
Let $R$ be a commutative Noetherian ring and $F$ a finite free
$R$-complex as in \eqref{ffc}. For each integer $n$ set
\begin{equation*}
  r_n\colonequals \sum_{i\ge n} (-1)^{i-n} \rank_R(F_i)\:.
\end{equation*}
For $n$ in $[a+1,b]$ this is the \emph{expected rank} of the map
$\partial_n$; see \cite[p.~24]{bruher}.

\begin{corollary}
  \label{codim1}
  With $F$ and $r_n$ as above there is an equality
  \begin{equation*}
    \dim_R \Hom_R(F,R) = \sup\{\dim(R/I_{r_n}(\partial^F_n)) + n
    \mid n \in \bbZ \}\:.
  \end{equation*}
\end{corollary}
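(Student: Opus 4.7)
The plan is to derive this corollary by applying Theorem~\ref{mainthm} directly to the dual complex $F^* \colonequals \Hom_R(F,R)$, carefully tracking how the invariants $s_n$ and the Fitting ideals transform under $R$-dualization.

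First, I would set $G = F^*$ and observe that $G$ is itself a finite free $R$-complex, concentrated in degrees $[-b,-a]$, with $G_n = \Hom_R(F_{-n},R)$ and differential $\partial^G_{n+1}$ equal to the $R$-dual of $\partial^F_{-n}\colon F_{-n}\to F_{-n-1}$. Since the matrix representing a dual map is the transpose of the matrix representing the original, one has the equality of Fitting ideals $I_s(\partial^G_{n+1}) = I_s(\partial^F_{-n})$ for every integer $s$. In particular Theorem~\ref{mainthm} applies to $G$, provided one identifies $s^G_n$ in terms of data for $F$.

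Next I would perform the short computation
\begin{equation*}
  s^G_n \:=\: \sum_{i\le n}(-1)^{n-i}\rank_R G_i \:=\: \sum_{j\ge -n}(-1)^{j-(-n)}\rank_R F_j \:=\: r_{-n}\:,
\end{equation*}
using the substitution $j=-i$. Plugging these identifications into the formula of Theorem~\ref{mainthm} applied to $G$ gives
\begin{equation*}
  \dim_R\Hom_R(F,R) \:=\: \sup\{\dim(R/I_{r_{-n}}(\partial^F_{-n})) - n \mid n\in\bbZ\}\:,
\end{equation*}
and the change of variables $m = -n$ yields precisely the claimed equality.

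There is no real obstacle: the argument is a direct application of Theorem~\ref{mainthm} to the dual complex, once one has verified the two elementary facts that (i)~taking the $R$-dual of a map between finite free modules preserves all Fitting ideals, and (ii)~the partial alternating rank sums $s_n$ of $\Hom_R(F,R)$ are exactly the expected ranks $r_{-n}$ of $F$. The only thing to be cautious about is the sign/grading convention on the dual complex, but since only minor ideals and ranks of modules enter the formulas, signs on the differentials are immaterial.
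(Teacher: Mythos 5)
Your proposal is correct and follows essentially the same route as the paper: dualize $F$ to get the finite free complex $G=\Hom_R(F,R)$, identify $s^G_n$ with $r_{-n}$, apply Theorem~\ref{mainthm} to $G$, and reindex. You are slightly more explicit than the paper in noting that transposition preserves minor ideals, which is a point the paper uses tacitly when writing $I_{s_n}(\partial^G_{n+1}) = I_{r_{-n}}(\partial^F_{-n})$, but the substance of the argument is identical.
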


\begin{proof}
  Set $G\colonequals \Hom_R(F,R)$. This too is a finite free complex,
  concentrated in degrees $[-b,-a]$, with differentials
  $\partial^{G}_{n} = \Hom_R(\partial^F_{1-n},R)$ for each $n$.  It is
  now easy to check that the expected ranks $r_n$ of $F$ and the
  invariants $s_n$ of $G$, from \eqref{sn}, determine each other:
  \begin{equation*}
    s_n(G) = r_{-n}(F)\quad\text{for each $n$}.
  \end{equation*}
  Whence one gets equalities
  \begin{align*}
    \dim_R G 
    &= \sup\{\dim (R/I_{s_n}(\partial^G_{n+1})) - n\mid n\in \bbZ\} \\
    &=  \sup\{\dim (R/I_{r_{-n}}(\partial^F_{-n}) -n \mid n\in\bbZ\} \\
    &=  \sup\{\dim (R/I_{r_n}(\partial^F_n)) + n  \mid n \in \bbZ \}\:.
      \qedhere
  \end{align*}
\end{proof}

\begin{remark}
  \label{bhcodim}
  Bruns and Herzog~\cite[Section 9.1]{bruher} have introduced a notion
  of ``codimension" for finite free complexes. This is perhaps a
  misnomer: Applied to the minimal free resolution of a module, the
  codimension does not equal the usual codimension of the module.  In
  fact, Corollary~\ref{codim1} yields that the codimension, in their
  sense, of any finite free $R$-complex $F$, is precisely
  $\dim R - \dim_R \Hom_R(F,R)$.

  Foxby also has a notion of codimension for an $R$-complex $X$,
  namely the invariant
  \begin{align*}
    \codim_R X & \colonequals \inf\{\dim R_\fp +
                 \inf\hh(X)_\fp \mid \fp\in\Spec R \} \\
               & =     \inf\{\codim_R \hh_n(X) + n \mid n\in \bbZ\}\:;
  \end{align*}
  see \cite[Lemma 5.1]{HBF79} and the definition preceding it. From
  the definitions one immediately gets
  $\codim_R X + \dim_R X \le \dim R$; equality holds if $R$ is local,
  catenary, and equidimensional. For a finite free complex $F$ over
  such a ring one thus has
  \begin{equation*}
    \codim_R \Hom_R(F,R) = \dim R - \dim_R\Hom_R(F,R)\:.
  \end{equation*}
  In particular, the codimension of $F$ in the sense of \cite{bruher}
  is the codimension of the dual complex, $\Hom_R(F,R)$, in the sense
  of \cite{HBF79}.
\end{remark}

In Bruns and Herzog's \cite{bruher} treatment of the homological
conjectures---most of which are now theorems thanks to
Andr\'e~\cite{YAn18}---their notion of codimension of a finite free
complex is key. Per Remark~\ref{bhcodim} this suggests that estimates
on the dimension of $\Hom_R(F,R)$ are useful, and that motivates the
development below.

\subsection*{Support} Let $R$ be a commutative Noetherian ring. The
\emph{large support} of an $R$-complex $X$ is the support of the
graded module $\hh(X)$, i.e.
\begin{equation*}
  \Supp_R X \colonequals \{\fp\in\Spec R \mid \hh_n(X)_\fp \ne 0
  \text{ for some $n$}\}\:.
\end{equation*}
Foxby \cite[Section 2]{HBF79} also introduced the (small)
\emph{support} of $X$ to be the set
\begin{equation*}
  \supp_R X \colonequals \{\fp\in\Spec R
  \mid \hh(\kappa(\fp) \Ltensor_R X) \ne 0 \}\:;
\end{equation*}
as usual, $\kappa(\fp)$ denotes the residue field of the local ring
$R_\fp$.  Support is connected to the finiteness of the depth of $X$:
\begin{equation*}
  \label{supp1}
  \supp_R X = \{\fp\in \Spec R \mid \depth_{R_\fp} X_\fp  < \infty \}\:.
\end{equation*}

We recall that the \emph{depth} of a complex $X$ over local ring $R$
with residue field $k$ is
\begin{equation*}
  \depth_RX \colonequals \inf\{n \in \bbZ \mid \Ext^n_R(k,X)\ne 0\}\:.
\end{equation*}

This invariant can also be computed in terms of the Koszul homology,
and the local cohomology, of $X$; see \cite{HBFSIn03}.

\begin{proposition}
  \label{fstar}
  Let $R$ be a commutative noetherian ring. For every finite free
  $R$-complex $F$ one has
  \begin{equation*}
    \dim_R \Hom_R(F,R) \le \dim R + \sup \hh(F) \:.
  \end{equation*}
\end{proposition}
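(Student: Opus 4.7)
My plan is to invoke Corollary~\ref{codim1} to reduce the desired inequality to the pointwise bound
\[
\dim(R/I_{r_n}(\partial^F_n)) + n \le \dim R + s \qquad (n \in \bbZ),
\]
where $s \colonequals \sup \hh(F)$. For $n \le s$ this is immediate from $\dim(R/I_{r_n}(\partial^F_n)) \le \dim R$, so the content lies in the range $n > s$.

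The key observation for $n > s$ is that the vanishing $\hh_i(F) = 0$ for $i > s$ is precisely what makes the truncation
\[
0 \lra F_b \xra{\partial} \cdots \xra{\partial} F_{s+1} \xra{\partial} F_s \lra 0,
\]
reindexed so that $F_s$ sits in homological degree $0$, a finite free resolution of the module $M \colonequals F_s/\partial^F_{s+1}(F_{s+1})$. Applying the Buchsbaum--Eisenbud acyclicity criterion (see \cite{bruher}) to this resolution---after noting that its $i$-th expected rank equals $r_{s+i}$ in the original indexing of $F$---then yields $\operatorname{grade}(I_{r_n}(\partial^F_n)) \ge n - s$ for every $n > s$.

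To finish, I would combine this grade bound with the standard inequalities $\operatorname{grade}(I) \le \operatorname{ht}(I)$ and $\dim(R/I) \le \dim R - \operatorname{ht}(I)$---the latter a consequence of $\dim R_\fp + \dim R/\fp \le \dim R$ applied to a minimal prime of $I$---to obtain $\dim(R/I_{r_n}(\partial^F_n)) \le \dim R - (n-s)$, which rearranges to the required bound. The main obstacle is recognizing that the top of $F$ conceals a finite free resolution amenable to Buchsbaum--Eisenbud; once spotted, the remainder is routine dimension bookkeeping.
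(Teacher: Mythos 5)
Your argument is correct, and it takes a genuinely different route from the paper's. The paper proves this directly via the Auslander--Buchsbaum equality: for each prime $\fp$, the quantity $-\inf\hh(\Hom_R(F,R))_\fp$ equals $\projdim_{R_\fp}F_\fp = \depth R_\fp - \depth_{R_\fp}F_\fp \le \dim R_\fp + \sup\hh(F)_\fp$, and then the definition~\eqref{dim0} of dimension finishes it. That proof is short, self-contained, and does not invoke Theorem~\ref{mainthm} at all. Your proof instead feeds Corollary~\ref{codim1} (hence the main theorem) into the Buchsbaum--Eisenbud acyclicity criterion applied to the truncation $0 \to F_b \to \cdots \to F_s \to 0$; the translation $r_i(G)=r_{s+i}(F)$ of expected ranks is correct, and the passage $\operatorname{grade}(I) \le \operatorname{ht}(I)$ together with $\dim(R/\fp)+\operatorname{ht}(\fp)\le\dim R$ for a minimal prime of $I$ does give $\dim(R/I_{r_n})+n\le\dim R + s$ for $n>s$. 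What your route buys is an explicit link between the determinantal ideals $I_{r_n}(\partial_n)$ and the Buchsbaum--Eisenbud grade bounds, making the estimate visibly a consequence of the acyclicity criterion; what it costs is reliance on the heavier machinery of Theorem~\ref{mainthm} and Corollary~\ref{codim1}, which the paper's proof avoids.

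One small point to shore up: if $\hh(F)=0$ then $s=-\infty$ and the truncation you propose is not defined, so your case split does not cover this situation. It is easy to dispatch---a bounded exact complex of finitely generated free modules is split exact, hence $\Hom_R(F,R)$ is exact too and both sides of the inequality are $-\infty$ (equivalently, split exactness forces $I_{r_n}(\partial_n)=R$ for all $n$)---but you should say so explicitly before assuming $s$ is an integer in $[a,b]$.
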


\begin{proof}
  For every prime ideal $\fp$ the complex $F_\fp$ has finite
  projective dimension, so the Auslander--Buchsbaum Formula combines
  with standard (in)equalities between invariants to yield
  \begin{align*}
    -\inf\hh(\Hom_R(F,R))_\fp 
    & = \sup\{m\in\bbZ \mid \Ext_{R_\fp}^m(F_\fp,R_\fp) \ne 0\} \\
    & = \projdim_{R_\fp}F_\fp \\
    & = \depth R_\fp - \depth_{R_\fp}F_\fp \\
    & \le \dim R_\fp + \sup \hh(F)_\fp \:.
  \end{align*}
  From the definition \eqref{dim0} one now gets
  \begin{align*}
    \dim_R\Hom_R(F,R) &\le \sup\{\dim (R/\fp) + \dim R_\fp +
                        \sup \hh(F)_\fp \mid \fp\in\Spec R \}  \\
                      &\le \dim R + \sup \hh(F)\:. \qedhere
  \end{align*}
\end{proof}

\begin{remark}
  For the minimal free resolution of a finitely generated module of
  finite projective dimension, the codimension considered in
  \cite{bruher} is non-negative by the Buchsbaum--Eisenbud acyclicity
  criterion; see the comment before \cite[Lemma 9.1.8]{bruher}. This
  compares to the inequality in Proposition~\ref{fstar}, rewritten
  as
  \begin{equation*}
    \dim R - \dim_R\Hom_R(F,R) \ge -\sup\hh(F) \:.
  \end{equation*}
\end{remark}

\subsection*{Balanced big Cohen--Macaulay modules}
Let $(R,\fm)$ be local and $M$ a big Cohen--Macaulay module; that is,
a module with $\depth_RM=\dim R$ and $\fm M\ne
M$. Hochster~\cite{MHc73a,MHc75} proved that such a module exists for
every equicharacteristic local ring, and Andr\'e \cite{YAn18a} proved
their existence over local rings of mixed characteristic. A big
Cohen--Macaulay $R$-module $M$ is called \emph{balanced} if every
system of parameters for $R$ is an $M$-regular sequence. The
$\fm$-adic completion of any big Cohen--Macaulay module is balanced;
see \cite[Theorem 8.5.3]{bruher}. Sharp~\cite{RYS81} demonstrated that
these modules behave much like maximal Cohen--Macaulay modules. Of
interest here is the fact that for a balanced big Cohen--Macaulay
module $M$ one has
\begin{equation}
  \label{bbcm}
  \depth_{R_\fp} M_\fp = \dim R - \dim (R/\fp) \quad 
  \text{for each $\fp \in\supp_RM$\:;}
\end{equation}
this is part $(iii)$ in \cite[Theorem 3.2]{RYS81}.
Note that what Sharp calls the supersupport of $M$ is the support of
$M$, in the sense above; this follows from comparison of
\cite[Remark~2.9]{HBF79} and part $(v)$ in \emph{op.\:cit.}

\begin{proposition}
  \label{sup}
  Let $R$ be a local ring, $F$ a finite free $R$-complex, and $M$ a
  balanced big Cohen--Macaulay module. One has
  \begin{equation*}
    \sup\hh(F \otimes_R M) =  \dim_R \Hom_R(F,R) - \dim R \:.
  \end{equation*}
\end{proposition}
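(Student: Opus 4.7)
The plan is to combine Corollary~\ref{codim1} with the Buchsbaum--Eisenbud acyclicity criterion in its version for balanced big Cohen--Macaulay modules \cite[Theorem~9.1.6]{bruher}. By Corollary~\ref{codim1},
\[
\dim_R\Hom_R(F,R) - \dim R \;=\; \sup_{n \in \bbZ}\{\Delta(n)\},
\qquad
\Delta(n) \colonequals n + \dim\bigl(R/I_{r_n}(\partial^F_n)\bigr) - \dim R\,.
\]
Set $D \colonequals \sup_n \Delta(n)$. Because $\dim(R/I) \le \dim R$ for any ideal $I$, any $n_0$ at which $\Delta$ attains the value $D$ satisfies $n_0 \ge D$.

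For an integer $t$, consider the truncation $F_{>t} \colonequals (0 \to F_b \to \cdots \to F_{t+1} \to 0)$, viewed (after shift by $t+1$) as a finite free complex in degrees $[0, b-t-1]$; its positive-degree homology coincides with $\hh_i(F \otimes_R M)$ for $i \ge t+2$. The cited criterion asserts that this shifted complex, tensored with $M$, is acyclic in positive degrees if and only if $\mathrm{grade}(I_{r_n}(\partial^F_n), M) \ge n-t-1$ for every $n \in [t+2, b]$. When $M$ is balanced big Cohen--Macaulay, \eqref{bbcm} together with the standard identity $\mathrm{grade}(I, M) = \inf\{\depth_{R_\fp} M_\fp : \fp\ \text{minimal over}\ I\}$ yields $\mathrm{grade}(I, M) = \dim R - \dim(R/I)$; so the condition becomes $\Delta(n) \le t+1$ for all $n \ge t+2$. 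In summary,
\[
\hh_i(F \otimes_R M) = 0\ \text{for all}\ i \ge t+2
\;\iff\;
\Delta(n) \le t+1\ \text{for all}\ n \ge t+2\,.
\]

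Applying this equivalence with $t+1 = D$ shows $\sup\hh(F \otimes_R M) \le D$, since $\Delta(n) \le D$ for every $n$. Applying it with $t+1 = D-1$: as $D$ is attained at some $n_0 \ge D$, we have $\Delta(n_0) = D > D-1$, violating the right-hand condition; hence $\hh_i(F \otimes_R M) \ne 0$ for some $i \ge D$, which combined with the previous vanishing forces $\hh_D(F \otimes_R M) \ne 0$, so $\sup\hh(F \otimes_R M) \ge D$. The main obstacle is the careful derivation of the grade identity $\mathrm{grade}(I, M) = \dim R - \dim(R/I)$ for balanced big Cohen--Macaulay modules together with the precise form of the Buchsbaum--Eisenbud criterion for such $M$; both are available in \cite[Chapter~9]{bruher}.
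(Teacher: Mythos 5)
Your proof takes a genuinely different route from the paper's. The paper reinterprets $F \otimes_R M$ as $\Hom_R(\Hom_R(F,R), M)$ and then applies, in four lines, Foxby's formula for $\sup\hh$ of a $\Hom$-complex from \cite[Proposition~3.4]{HBF79}, Sharp's local depth formula \eqref{bbcm}, and the definition \eqref{dim0} of dimension---never touching Fitting ideals or expected ranks. You instead go through the combinatorial description of $\dim_R\Hom_R(F,R)$ from Corollary~\ref{codim1} and the Buchsbaum--Eisenbud acyclicity criterion applied to truncations of $F$; this is in effect a sharpening of Bruns and Herzog's own argument for \cite[Lemma~9.1.8]{bruher}, upgraded from a vanishing statement to an equality. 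The structure of your squeeze with $t+1 = D$ and $t+1 = D-1$ is sound, the translation of the grade inequalities into $\Delta(n) \le t+1$ is correct, and the bookkeeping for the shifted truncation checks out. Two ingredients, however, carry more weight than you give them. The identity $\mathrm{grade}(I,M) = \dim R - \dim(R/I)$ for a balanced big Cohen--Macaulay $M$ and proper $I$ is true, but the ``standard identity'' $\mathrm{grade}(I,M)=\inf\{\depth_{R_\fp}M_\fp : \fp \text{ minimal over } I\}$ is the formula for \emph{finitely generated} modules and even there runs over all of $V(I)$; a cleaner route for balanced big CM modules is via prime avoidance (any $\dim R - \dim(R/I)$ elements of $I$ that extend to a system of parameters are $M$-regular) and the converse that an $M$-regular sequence in a proper ideal is part of a system of parameters. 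Likewise, the acyclicity criterion you invoke must be the version valid for non-finitely-generated coefficient modules; this exists in \cite[Section~9.1]{bruher}, so the citation is in the right neighborhood but should be verified. Finally, the degenerate case $\hh(F)=0$, where $D=-\infty$ and the substitutions $t+1=D$ and $t+1=D-1$ make no sense, should be noted and dispatched directly. Both proofs are valid; the paper's is shorter precisely because it stays in the derived category and avoids the determinantal formulas entirely.
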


\begin{proof}
  Set $G \colonequals \Hom_R(F,R)$. There is an isomorphism
  $F \otimes M \cong \Hom_R(G,M)$. In the computation below, the first
  equality holds by \cite[Proposition 3.4]{HBF79}. The second equality
  follows from \eqref{bbcm} and the fourth one follows from
  \eqref{dim0}.
  \begin{align*}
    \sup\hh(\Hom_R(G,M)) 
    & = -\inf\{\depth_{R_\fp} M_\fp + \inf \hh(G)_\fp \mid \fp\in\Spec R \} \\
    & = -\inf\{\dim R - \dim (R/\fp) + \inf \hh(G)_\fp \mid \fp\in\Spec R \} \\
    & = \sup\{\dim (R/\fp) - \inf \hh(G)_\fp \mid \fp\in\Spec R \} - \dim R \\
    & = \dim_R G - \dim R\:. \qedhere
  \end{align*}
\end{proof}

Given Remark~\ref{bhcodim}, the theorem above recovers \cite[Lemma
9.1.8]{bruher}:

\begin{corollary}
  \label{BH}
  Let $R$ be a local ring and
  $F \colonequals 0 \to F_b \to \cdots \to F_0 \to 0$ a finite free
  $R$-complex.  If $\dim R - \dim_R\Hom_R(F,R) \ge 0$ holds, then for
  any balanced big Cohen--Macaulay module $M$ one has
  $\hh_i(F \otimes_R M)=0$ for all $i\ge 1$. \qedhere
\end{corollary}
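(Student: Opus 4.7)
The plan is to deduce this directly from Proposition~\ref{sup}, which is essentially tailor-made for this situation. Since $F$ is concentrated in non-negative degrees, so is $F \otimes_R M$, and hence the claim that $\hh_i(F \otimes_R M) = 0$ for $i \ge 1$ is equivalent to the statement that $\sup \hh(F \otimes_R M) \le 0$.

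First I would invoke Proposition~\ref{sup} to rewrite
\begin{equation*}
\sup \hh(F \otimes_R M) = \dim_R \Hom_R(F,R) - \dim R.
\end{equation*}
Then the hypothesis $\dim R - \dim_R \Hom_R(F,R) \ge 0$ rearranges exactly to $\dim_R \Hom_R(F,R) - \dim R \le 0$, giving $\sup \hh(F \otimes_R M) \le 0$ immediately.

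There is no real obstacle here since all the work has been done in Proposition~\ref{sup}; the only thing to be slightly careful about is that the inequality $\sup \hh(F \otimes_R M) \le 0$ translates correctly to vanishing of $\hh_i$ for $i \ge 1$, which uses the grading convention established at the outset (so that $\sup \hh$ refers to the largest degree in which homology is nonzero).
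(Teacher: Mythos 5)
Your proof is correct and is precisely the argument the paper intends: the corollary is stated as an immediate consequence of Proposition~\ref{sup}, with no separate proof given, because rearranging the equality $\sup\hh(F\otimes_R M)=\dim_R\Hom_R(F,R)-\dim R$ against the hypothesis gives the conclusion at once. Your observation about the grading convention is exactly the small sanity check one should make, so nothing is missing.
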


In \cite[Section 9.4]{bruher} it is shown how to derive various
intersections theorems, including the New Intersection Theorem, from
Corollary \ref{BH}. The latter sheds further light on the invariant
$\dim R - \dim_R\Hom_R(F,R)$.

\begin{theorem}
  \label{Fstar}
  Let $R$ be a local ring and $F$ a finite free $R$-complex. One has
  \begin{equation*}
    \dim R + \inf \hh(F) \le \dim_R \Hom_R(F,R) \le
    \dim R + \sup \hh(F)\:.
  \end{equation*}
\end{theorem}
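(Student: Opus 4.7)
The upper bound is exactly Proposition~\ref{fstar}, so the task is to prove the lower inequality $\dim R + \inf\hh(F) \le \dim_R \Hom_R(F,R)$. Assuming $\hh(F)\ne 0$, set $s \colonequals \inf\hh(F)$ and pick a balanced big Cohen--Macaulay $R$-module $M$; such an $M$ exists by the theorems of Hochster and Andr\'e recalled above. Proposition~\ref{sup} then reformulates the inequality to be proved as
\begin{equation*}
  \sup\hh(F\otimes_R M) \;\ge\; s,
\end{equation*}
so it suffices to exhibit nonzero homology of $F\otimes_R M$ in degree $s$.

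My plan is to identify $\hh_s(F\otimes_R M)$ with $\hh_s(F)\otimes_R M$. Since $F$ is a bounded complex of free modules, $F\otimes_R M$ already represents $F\Ltensor_R M$. In the standard hyperhomology spectral sequence
\begin{equation*}
  E^2_{p,q} = \operatorname{Tor}_p^R(\hh_q(F),M) \;\Longrightarrow\; \hh_{p+q}(F\Ltensor_R M),
\end{equation*}
the constraints $p \ge 0$ and $\hh_q(F)=0$ for $q<s$ leave $E^2_{0,s} = \hh_s(F)\otimes_R M$ as the only possibly nonzero entry of total degree $s$, and the same vanishing forces every differential into or out of this spot to be zero. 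Consequently $\hh_s(F\otimes_R M)\cong \hh_s(F)\otimes_R M$.

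To finish, set $N\colonequals \hh_s(F)$. This module is nonzero and finitely generated, so Nakayama gives $N/\fm N\ne 0$, while $M/\fm M \ne 0$ by the hypothesis that $M$ is a balanced big Cohen--Macaulay module. The isomorphism
\begin{equation*}
  (N\otimes_R M)/\fm(N\otimes_R M) \;\cong\; (N/\fm N)\otimes_k (M/\fm M)
\end{equation*}
then exhibits a nonzero $k$-vector-space quotient of $N\otimes_R M$, so $N\otimes_R M\ne 0$, as required. I do not foresee any serious obstacle: the only mildly delicate point is the spectral-sequence identification of $\hh_s(F\otimes_R M)$ at the bottom of the complex, and that identification is essentially forced by the combination of $p\ge 0$ with the vanishing $\hh_{<s}(F)=0$.
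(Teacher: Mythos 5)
Your proof is correct, but it follows a genuinely different route from the paper's. The paper deduces the lower bound by applying Foxby's version of the New Intersection Theorem to $G\colonequals\Hom_R(F,R)$, getting $\dim R - \dim_R G \le \projdim_R G$, and then identifies $\projdim_R G = -\inf\hh(\Hom_R(G,R)) = -\inf\hh(F)$ by biduality. You instead reuse Proposition~\ref{sup}: with $M$ a balanced big Cohen--Macaulay module, the lower bound is equivalent to $\sup\hh(F\otimes_R M)\ge\inf\hh(F)$, which you obtain by identifying $\hh_s(F\otimes_R M)$ with $\hh_s(F)\otimes_R M$ for $s=\inf\hh(F)$ and invoking Nakayama. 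Your spectral-sequence step is sound (and could be done even more elementarily: since $\hh_i(F)=0$ for $i<s$ and $F$ consists of free modules over a local ring, $F$ splits off trivial complexes below degree $s$, after which right-exactness of $-\otimes_R M$ gives the identification directly). The comparison is instructive: the paper's argument is shorter and makes the link to the New Intersection Theorem explicit, whereas yours is more self-contained relative to the surrounding text, deriving the lower bound from the same big Cohen--Macaulay machinery already set up for Proposition~\ref{sup} rather than importing Foxby's NIT lemma as a separate black box. Both proofs ultimately rest on the deep existence results of Hochster and Andr\'e; and, as the paper notes, the NIT can itself be derived from Corollary~\ref{BH}, so the two routes are closer under the surface than they first appear. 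One cosmetic point: you rightly set aside the case $\hh(F)=0$ at the outset; the paper's proof implicitly does the same.
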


\begin{proof}
  The right-hand inequality holds by Proposition~\ref{fstar}.  Since
  $R$ is local, one can apply the version of the New Intersection
  Theorem recorded by Foxby \cite[Lemma 4.1]{HBF79} to the complex
  $\Hom_R(F,R)$ to get
  \begin{equation*}
    \dim R - \dim_R\Hom_R(F,R) \le \projdim_R \Hom_R(F,R)\;.
  \end{equation*}
  As $\Hom_R(F,R)$ is also a finite free complex one has
  \begin{equation*}
    \projdim_R \Hom_R(F,R) = -\inf\hh(\Hom_R(\Hom_R(F,R),R)) = -\inf
    \hh(F)\:. \qedhere
  \end{equation*}
\end{proof}

\begin{remark}
  Let $R$ be a local ring and $M$ a nonzero finitely generated
  $R$-module of finite projective dimension. Applying
  Theorem~\ref{Fstar} to a finite free resolution of $M$ yields
  the equality
  \begin{equation*}
    \max\{\dim_R \Ext_R^n(M,R) + n \mid n\in \bbZ\} = \dim R\:.
  \end{equation*}
  Notice that with $p \colonequals \projdim_R M$ one gets inequalities
  \begin{equation*}
    \dim R - \dim_R M \le \projdim_R M \le \dim R - \dim_R \Ext^p_R(M,R) \:;
  \end{equation*}
  the inequality on the left is the version of the New Intersection
  Theorem that went into the proof of Theorem \ref{Fstar}.
\end{remark}


\def\soft#1{\leavevmode\setbox0=\hbox{h}\dimen7=\ht0\advance \dimen7
  by-1ex\relax\if t#1\relax\rlap{\raise.6\dimen7
  \hbox{\kern.3ex\char'47}}#1\relax\else\if T#1\relax
  \rlap{\raise.5\dimen7\hbox{\kern1.3ex\char'47}}#1\relax \else\if
  d#1\relax\rlap{\raise.5\dimen7\hbox{\kern.9ex \char'47}}#1\relax\else\if
  D#1\relax\rlap{\raise.5\dimen7 \hbox{\kern1.4ex\char'47}}#1\relax\else\if
  l#1\relax \rlap{\raise.5\dimen7\hbox{\kern.4ex\char'47}}#1\relax \else\if
  L#1\relax\rlap{\raise.5\dimen7\hbox{\kern.7ex
  \char'47}}#1\relax\else\message{accent \string\soft \space #1 not
  defined!}#1\relax\fi\fi\fi\fi\fi\fi}
  \providecommand{\MR}[1]{\mbox{\href{http://www.ams.org/mathscinet-getitem?mr=#1}{#1}}}
  \renewcommand{\MR}[1]{\mbox{\href{http://www.ams.org/mathscinet-getitem?mr=#1}{#1}}}
  \providecommand{\arxiv}[2][AC]{\mbox{\href{http://arxiv.org/abs/#2}{\sf
  arXiv:#2 [math.#1]}}} \def\cprime{$'$}
\providecommand{\bysame}{\leavevmode\hbox to3em{\hrulefill}\thinspace}
\providecommand{\MR}{\relax\ifhmode\unskip\space\fi MR }
\providecommand{\MRhref}[2]{%
  \href{http://www.ams.org/mathscinet-getitem?mr=#1}{#2}
}
\providecommand{\href}[2]{#2}

\end{document}